\newtheorem{theorem}{\sc Theorem}[section]
\newtheorem{thm}[theorem]{\sc Theorem}
\newtheorem{lem}[theorem]{\sc Lemma}
\newtheorem{prop}[theorem]{\sc Proposition}
\newtheorem{rem}[theorem]{\sc Remark}
\newtheorem{ex}[theorem]{\sc Example}
 \newtheorem*{thmA}{Theorem A}
 \newtheorem*{thmB}{Theorem B}
\title{On finite groups with few automorphism orbits}
\author{Raimundo Bastos and Alex Carrazedo Dantas}
\subjclass[2010]{20D05; 20D45.}
\keywords{Finite groups; automorphism groups}
\begin{document}
\maketitle

\begin{abstract}
Denote by $\omega(G)$ the number of orbits of the action of $Aut(G)$ on the finite group $G$. We prove that if $G$ is a finite nonsolvable group in which $\omega(G) \leqslant 5$, then $G$ is isomorphic to one of the groups $A_5,A_6,PSL(2,7)$ or $PSL(2,8)$. We also consider the case when $\omega(G) = 6$ and show that if $G$ is a nonsolvable finite group with $\omega(G) = 6$, then either $G \simeq PSL(3,4)$ or there exists a characteristic elementary abelian $2$-subgroup $N$ of $G$ such that $G/N \simeq A_5$.
\end{abstract}

\maketitle

\section{Introduction}

The groups considered in the following are finite. The problem of the classification of the groups with a prescribed number of {\emph conjugacy classes} was suggested in \cite{B}. For more details for this problem we refer the reader to \cite{VS}. In this paper we consider an other related invariant. Denote by $\omega(G)$ the number of orbits of the action of $Aut(G)$ on the finite group $G$. If $\omega(G) = n$, then we say that $G$ has $n$ automorphism orbits. The trivial group is the only group with $\omega(G) = 1$. It is clear that $\omega(G) = 2$ if and only if $G$ is an elementary abelian $p$-group, for some prime number $p$ \cite[3.13]{D}. In \cite{LD}, Laffey and MacHale give the following results:

\begin{itemize}
\item[(i)]  Let $G$ be a finite group which is not of prime-power order. If $w(G) = 3$, then $|G| = p^nq$ and $G$ has a normal elementary abelian Sylow $p$-subgroup $P$, for some primes $p$, $q$, and for some integer $n > 1$. Furthermore, $p$ is a primitive root mod $q$.
\item[(ii)] If $w(G)\leqslant 4$ in a group $G$, then either $G$ is solvable or $G$ is isomorphic to $A_5$.
\end{itemize}

Stroppel in \cite[Theorem 4.5]{S} has shown that if $G$ is a nonabelian simple group with $\omega(G) \leqslant 5$, then $G$ is isomorphic to one of the groups $A_5$, $A_6$, $PSL(2,7)$ or $PSL(2,8)$. In the same work he suggested the following problem: \\

{\noindent}{\bf Problem.}{
(Stroppel \cite[Problem 9.9]{S}) Determine the finite nonsolvable groups $G$ in which $\omega(G) \leqslant 6$.} \\

In answer to Stroppel's question, we give a complete classification for the case $\omega(G)\leq 5$ in Theorem A and provide a characterization of $G$ when $\omega(G)= 6$ in Theorem B. Precisely:

\begin{thmA}\label{th.A}
Let $G$ be a nonsolvable group in which $\omega(G) \leqslant 5$. Then $G$ is isomorphic to one of the groups $A_5,A_6,PSL(2,7)$ or $PSL(2,8)$. 
\end{thmA}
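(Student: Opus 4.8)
The plan is to reduce to the simple case and then invoke Stroppel's theorem. By item (ii) of Laffey--MacHale, a nonsolvable group with $\omega\le 4$ is $A_5$, so I may assume $\omega(G)=5$; and by Stroppel's theorem it then suffices to prove that a nonsolvable $G$ with $\omega(G)=5$ is simple. Assume it is not. Two elementary facts will be used throughout. First, elements in a common $\operatorname{Aut}(G)$-orbit have equal order, so $G$ has exactly $\omega(G)-1=4$ distinct nontrivial element orders; since a nonsolvable group has at least three prime divisors and (classically) the only nonsolvable group all of whose elements have prime order is $A_5$ (with $\omega(A_5)=4$), these four orders cannot all be primes. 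Second, if $N$ is characteristic in $G$, then restriction embeds $\operatorname{Aut}(G)|_N$ in $\operatorname{Aut}(N)$ and passage to the quotient maps $\operatorname{Aut}(G)$ into $\operatorname{Aut}(G/N)$, so counting orbits gives
\[
\omega(G)\ \ge\ \omega(N)+\omega(G/N)-1\qquad(1<N<G),
\]
and in case of equality the $\operatorname{Aut}(G)$-orbits on $N$ are exactly the $\operatorname{Aut}(N)$-orbits, while the preimage in $G$ of each nontrivial $\operatorname{Aut}(G/N)$-orbit on $G/N$ is a single $\operatorname{Aut}(G)$-orbit.

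Now I would take $N$ a minimal characteristic subgroup of $G$; it is characteristically simple, hence $N\cong S^{k}$ with $S$ simple. If $S$ is nonabelian and $k\ge 2$, a direct-product orbit count yields $\omega(N)=\omega(S^{k})\ge 6>\omega(G)$, impossible; so $k=1$ and $N=S$ is simple. If $N=G$ we are done. Otherwise $\omega(S)\le 4$, hence $S\cong A_5$; if $C_G(S)=1$ then $A_5\le G\le\operatorname{Aut}(A_5)=S_5$ forces $G=S_5$, but $\omega(S_5)=7$; if $C_G(S)\ne 1$ it is characteristic with $\omega(C_G(S))\le 2$, so it is elementary abelian and $A_5\times C_G(S)$ is a characteristic subgroup with at least six element orders, again contradicting $\omega(G)=5$. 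There remains the case $S=C_p$: here $N$ is a nontrivial characteristic elementary abelian $p$-subgroup, $G/N$ is nonsolvable with $\omega(G/N)\le 4$, so $G/N\cong A_5$, and equality in the displayed inequality forces $N\smallsetminus\{1\}$ to be a single $\operatorname{Aut}(G)$-orbit and the preimages in $G$ of the classes of involutions, of $3$-cycles and of $5$-cycles of $A_5$ each to be a single $\operatorname{Aut}(G)$-orbit.

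This is the crux. If $N$ is central, then $G$ is a central extension of $A_5$, so $G\cong SL(2,5)$ or $G\cong A_5\times C_p^{\,j}$, each having at least six element orders. So $N$ is a faithful $\mathbb F_p[A_5]$-module. Consider lifts of an involution $\bar t\in A_5$. If $p$ is odd, the involutions of $A_5$ cannot all act as $-1$ on $N$ (they do not commute), so some $\bar t$ has $C_N(\bar t)\neq 0$ and $[N,\bar t]\neq 0$; computing squares of its lifts shows they realise both order $2$ and order $2p$, contradicting that the involution-preimage is a single orbit. If $p=2$, then the fact that $N\smallsetminus\{1\}$ is a single $\operatorname{Aut}(G)$-orbit, combined with the $2$-modular representation theory of $A_5$ (equivalently the classification of transitive linear groups and a Zsigmondy-type divisibility argument), forces $N$ to be the natural $4$-dimensional $SL(2,4)$-module; then every involution of $A_5$ acts on $N$ with Jordan blocks of size $2$ only, so some lift of $\bar t$ has order $2$, and since $N$ is characteristic $\operatorname{Aut}(G)$ cannot move this element into the orbit of $N\smallsetminus\{1\}$, so $G$ has two $\operatorname{Aut}(G)$-orbits of involutions and $\omega(G)\ge 6$. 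Every case ends in a contradiction, so $G$ is simple and Stroppel's theorem finishes the proof.

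The step I expect to be the real obstacle is this last one: converting the extremely tight orbit budget of $\omega(G)=5$ into usable arithmetic on the module $N$ and on orders of lifts. The characteristic-$2$ subcase is the most delicate, since there the orbit-counting inequalities do not suffice by themselves and one genuinely needs structural input — modular representations of $A_5$, Zsigmondy-type divisibility, or Hering's theorem — both to eliminate the larger modules and to settle the natural module.
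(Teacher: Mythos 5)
Your overall strategy (reduce to a characteristic subgroup, use the inequality $\omega(G)\geqslant\omega(N)+\omega(G/N)-1$ together with Laffey--MacHale to force $N$ elementary abelian and $G/N\simeq A_5$, then derive a contradiction from element orders) is the same as the paper's, and your treatment of the nonabelian socle, the central case, and the case $p$ odd is sound. The genuine gap is the endgame of the case $p=2$, $N$ noncentral. Having produced an involution lift $t$ of $\bar t$, you conclude: ``$G$ has two $\operatorname{Aut}(G)$-orbits of involutions and $\omega(G)\geqslant 6$.'' That inference is false: the configuration consisting of the orbits $\{1\}$, $N\smallsetminus\{1\}$, the preimage of the involutions of $A_5$ (all of order $2$), the preimage of the $3$-elements, and the preimage of the $5$-elements is exactly five orbits, two of which consist of involutions. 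Nothing in the hypothesis $\omega(G)=5$ forces distinct orbits to have distinct element orders (that would require $G$ to be an AT-group, which you do not have here, since only four element orders occur in this configuration). So as written the $p=2$ case does not terminate in a contradiction. A secondary weakness is that the identification of $N$ with the natural $SL(2,4)$-module is only asserted (Hering/Zsigmondy are invoked without detail, and reducibility of $N$ as a module must also be excluded), although that step could be made rigorous.

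The repair is short and stays inside your own setup. Since the involution-preimage is a single $\operatorname{Aut}(G)$-orbit, the existence of one involution lift forces \emph{every} element of every coset $gN$ with $\bar g$ an involution to have order $2$; then for all $h\in N$ one has $(gh)^2=1$, hence $ghg^{-1}=h$, so every such $g$ centralizes $N$ and $C_G(N)N/N$ is a nontrivial normal subgroup of $G/N\simeq A_5$, giving $N\leqslant Z(G)$ and contradicting the assumed faithfulness of the action (alternatively, on the natural module pick $h\notin C_N(t)$: then $(th)^2=h^th\neq 1$, so $t$ and $th$ lie in the same orbit with different orders). Note that the paper avoids the module theory altogether: if some element outside $N$ has order $4$, then $G$ has five distinct element orders, hence is an AT-group, hence simple by Zhang's theorem, a contradiction; otherwise all involution lifts are involutions, $N$ is central, and at least six element orders appear. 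That dichotomy makes your Hering/Zsigmondy step unnecessary.
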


Using GAP, we obtained an example of a nonsolvable and non simple group $G$ in which $\omega(G) = 6$ and $|G|=960$. Moreover, there exists a characteristic subgroup $N$ of $G$ such that $G/N \simeq A_5$, where $N$ is an elementary abelian $2$-subgroup. Actually, we will prove that this is the case for any nonsolvable non simple group with $6$ automorfism orbits.

\begin{thmB} \label{th.B}
Let $G$ be a nonsolvable group in which $\omega(G) = 6$. Then one of the following holds:
\end{thmB}

\begin{itemize}
 \item[(i)] $G \simeq PSL(3,4)$; 
 \item[(ii)] There exists a characteristic elementary abelian $2$-subgroup $N$ of $G$ such that $G/N \simeq A_5$.
\end{itemize}

According to Landau's result \cite[Theorem 4.31]{Rose}, for every positive integer $n$, there are only finitely many groups with exactly $n$ conjugacy classes. It is easy to see that no exists similar result for automorphism orbits. Nevertheless, using the classification of finite simple groups, Kohl has been able to prove that for every positive integer $n$ there are only finitely many nonabelian simple groups with exactly $n$ automorphism orbits \cite[Theorem 2.1]{Kohl}.  This suggests the following question. \\

{\noindent}{\it Are there only finitely many nonsolvable groups with $6$ automorphism orbits?} 

\section{Preliminary results}

A group $G$ is called AT-group if all elements of the same order are conjugate in the automorphism groups. The following result is a straightforward corollary of \cite[Theorem 3.1]{Z}.

\begin{lem} \label{Z.th}
Let $G$ be a nonsolvable AT-group in which $\omega(G) \leqslant 6$. Then $G$ is simple. Moreover, $G$ is isomorphic to one of the groups $A_5$, $A_6$, $PSL(2,7)$, $PSL(2,8)$ or $PSL(3,4)$.
\end{lem}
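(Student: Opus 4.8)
The plan is first to turn the hypothesis $\omega(G)\leqslant 6$ into a statement about element orders, and then to appeal to the structure theorem for finite nonsolvable AT-groups. Since automorphisms preserve element orders, every $\mathrm{Aut}(G)$-orbit on $G$ is contained in a single level set $\{g\in G:|g|=n\}$ of the order function; and, because $G$ is an AT-group, each such nonempty level set is exactly one $\mathrm{Aut}(G)$-orbit. Hence $\omega(G)$ equals $|\pi_e(G)|$, the number of distinct element orders of $G$ (its spectrum), and the hypothesis reads: $G$ has at most six element orders.

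Next I would invoke \cite[Theorem 3.1]{Z}, which describes the finite nonsolvable AT-groups explicitly and identifies $G$ with one of a finite list of nonabelian simple groups; this already settles the assertion that $G$ is simple. It then remains to retain only those candidates $H$ with $|\pi_e(H)|\leqslant 6$. For the five groups named in the statement this is read off from their spectra: $\pi_e(A_5)=\{1,2,3,5\}$; $\pi_e(A_6)=\pi_e(PSL(2,9))=\{1,2,3,4,5\}$; $\pi_e(PSL(2,7))=\{1,2,3,4,7\}$; $\pi_e(PSL(2,8))=\{1,2,3,7,9\}$; and $\pi_e(PSL(3,4))=\{1,2,3,4,5,7\}$, so that $\omega(A_5)=4$, $\omega(A_6)=\omega(PSL(2,7))=\omega(PSL(2,8))=5$ and $\omega(PSL(3,4))=6$. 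Every other group on the list of \cite[Theorem 3.1]{Z} has spectrum of size at least $7$ and is discarded, which leaves exactly the five groups claimed.

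The hard part of this argument is not on our side at all: it is \cite[Theorem 3.1]{Z} itself, which rests on the classification of finite simple groups and on a case analysis of which groups can be AT-groups; we are entitled to quote it. What must genuinely be verified here is only the essentially tautological identity $\omega(G)=|\pi_e(G)|$ for AT-groups, together with the finite spectrum bookkeeping above. The one point deserving care is the borderline behaviour: there exist small simple groups whose spectrum has exactly six elements and which are nonetheless \emph{not} AT-groups --- for instance $Sz(8)$, whose elements of order $4$ split into two $\mathrm{Aut}$-orbits because $\mathrm{Out}(Sz(8))\cong\mathbb{Z}_3$ has odd order --- so one must make sure such groups are genuinely absent from the list in \cite[Theorem 3.1]{Z} rather than tacitly discarded, and likewise that the bound $|\pi_e|\leqslant 6$ admits no non-simple nonsolvable AT-group. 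Granting this, the lemma follows.
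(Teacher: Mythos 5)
Your argument is correct and follows the same route as the paper, which offers no separate proof but simply declares the lemma a straightforward corollary of Zhang's Theorem 3.1 \cite{Z}: you quote that classification and supply the bookkeeping, namely that for an AT-group $\omega(G)=|spec(G)|$, so only the candidates with at most six element orders survive. Your extra caution about borderline groups such as $Sz(8)$ (not an AT-group, since its two classes of order-$4$ elements cannot be fused by an outer automorphism group of order $3$) is a sensible check and consistent with the paper's implicit reliance on Zhang's list.
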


The spectrum of a group is the set of orders of its elements. Let us denote by $spec(G)$ the spectrum of the group $G$.

\begin{rem} \label{rem.spec} The maximal subgroups of $A_5$, $A_6$, $PSL(2,7)$, $PSL(2,8)$ and $PSL(3,4)$ are well know (see for instance \cite{Atlas}). Then  
\begin{itemize}
\item[(i)] $spec(A_5) = \{1,2,3,5\}$;
\item[(ii)] $spec(A_6) = \{1,2,3,4,5\}$;
\item[(iii)] $spec(PSL(2,7)) = \{1,2,3,4,7\}$;
\item[(iv)] $spec(PSL(2,8)) = \{1,2,3,7,9\}$;
\item[(v)] $spec(PSL(3,4)) = \{1,2,3,4,5,7\}$
\end{itemize}
\end{rem}

For a group $G$ we denote by $\pi(G)$ the set of prime divisors of the orders of the elements of $G$. 

Recall that a group $G$ is a characteristically simple group if $G$ has no proper nontrivial characteristic subgroups.

\begin{lem} \label{ch.simple}
Let $G$ be a nonabelian group. If $G$ is a characteristically simple group in which $\omega(G) \leqslant 6$, then $G$ is simple.
\end{lem}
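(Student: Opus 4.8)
The key structural fact is that a characteristically simple finite group is a direct product $G \simeq S^k = S \times \cdots \times S$ of isomorphic copies of a simple group $S$. Since $G$ is nonabelian by hypothesis, $S$ is a nonabelian simple group. My plan is to show $k = 1$ by ruling out $k \geq 2$ via an orbit-counting argument.

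So I would prove: if $G = S^k$ with $S$ nonabelian simple and $k \geq 2$, then $\omega(G) \geq 7$. The point is that $\mathrm{Aut}(S^k) = \mathrm{Aut}(S) \wr \mathrm{Sym}(k)$, so the $\mathrm{Aut}(G)$-orbit of an element $(s_1, \ldots, s_k)$ is determined by the multiset of $\mathrm{Aut}(S)$-orbits of its coordinates. Hence $\omega(S^k)$ equals the number of multisets of size $k$ drawn from the set of $\mathrm{Aut}(S)$-orbits on $S$, i.e. $\omega(S^k) = \binom{\omega(S) + k - 1}{k}$.

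Now since $S$ is nonabelian simple, $\omega(S) \geq 4$ (any group with $\omega \leq 3$ that is not of prime-power order has the normal Sylow structure in Laffey–MacHale (i), hence is solvable; and $\omega \leq 2$ forces elementary abelian). Actually we can simply invoke Theorem~4.5 of \cite{S} / Lemma~\ref{Z.th}: the smallest $\omega$ among nonabelian simple groups is $\omega(A_5) = 4$. Then for $k = 2$ we get $\omega(S^2) = \binom{\omega(S)+1}{2} \geq \binom{5}{2} = 10 > 6$, and the count is monotincreasing in both $\omega(S)$ and $k$, so $\omega(S^k) > 6$ for all $k \geq 2$. This forces $k = 1$, i.e. $G = S$ is simple. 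Finally, since $\omega(G) \leq 6$, Lemma~\ref{Z.th} (or directly Stroppel's result together with the $\omega = 6$ bookkeeping) pins $G$ down further if desired, but simplicity is all the statement claims.

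The only delicate point is justifying $\mathrm{Aut}(S^k) = \mathrm{Aut}(S) \wr \mathrm{Sym}(k)$ and the resulting description of orbits; this is standard when the direct factors are the (characteristically simple) minimal normal subgroups permuted by $\mathrm{Aut}(G)$, but it should be stated carefully. An alternative, more self-contained route avoiding the automorphism-group computation: take an involution-free element or an element $x$ of prime order $p$ in $S$; then in $S^k$ the elements $(x,1,\ldots,1)$, $(x,x,1,\ldots,1)$, \ldots, $(x,\ldots,x)$ all have order $p$ but lie in $k$ distinct $\mathrm{Aut}(G)$-orbits (distinguished by the number of nontrivial coordinates, an $\mathrm{Aut}(G)$-invariant since $\mathrm{Aut}(G)$ permutes the factors), and together with the identity and elements of other orders this already exceeds $6$ once $k$ and $\omega(S)$ are not both tiny. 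I expect the bookkeeping to merge these two viewpoints to be the only real work; the main obstacle is simply making the orbit-on-$S^k$ description rigorous rather than any genuine difficulty.
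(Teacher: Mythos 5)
Your argument is correct, but it is genuinely different from the paper's. You decompose $G\simeq S^k$ with $S$ nonabelian simple (as the paper does, via the structure of characteristically simple groups), but then you count orbits through the automorphism group: using $\mathrm{Aut}(S^k)\simeq \mathrm{Aut}(S)\wr \mathrm{Sym}(k)$ you identify orbits with multisets of $\mathrm{Aut}(S)$-orbits of the coordinates, so $\omega(S^k)=\binom{\omega(S)+k-1}{k}\geqslant\binom{5}{2}=10>6$ once $k\geqslant 2$, since $\omega(S)\geqslant 4$ for a nonabelian simple group. The paper instead never touches $\mathrm{Aut}(S^k)$: it invokes Burnside's $p^aq^b$ theorem to get $|\pi(S)|\geqslant 3$, observes that with $k\geqslant 2$ factors one finds elements of order $p_ip_j$ for every pair of distinct primes, and concludes $|\mathrm{spec}(G)|\geqslant 7$, hence $\omega(G)\geqslant 7$, because distinct element orders force distinct orbits. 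The paper's route is more elementary (only element orders are counted, no knowledge of the automorphism group of a direct power is needed), while yours yields an exact formula and a stronger bound ($\geqslant 10$ rather than $\geqslant 7$), at the price of the wreath-product description — though for the lower bound you really only need the containment $\mathrm{Aut}(S^k)\leqslant \mathrm{Aut}(S)\wr\mathrm{Sym}(k)$, which follows from the fact that automorphisms permute the $k$ minimal normal subgroups, so your ``delicate point'' is easily discharged. Your sketched second, ``self-contained'' variant (counting orbits of $(x,1,\dots,1),\dots,(x,\dots,x)$) is left vague (``once $k$ and $\omega(S)$ are not both tiny''), and as written it is closer in spirit to the paper's order-counting; but since your main route is complete, this does not affect correctness.
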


\begin{proof}
Suppose that $G$ is not simple. By \cite[Theorem 1.5]{G}, there exist a nonabelian simple subgroup $H$ and an integer $k \geqslant 2$ such that
$$
G = \underbrace{H \times \ldots \times H}_{k \ times}.
$$
By Burnside's Theorem \cite[p. 131]{G}, $\pi(G) =\{p_1,\ldots, p_s\}$, where $s \geqslant 3$. Then, there are elements in $G$ of order $p_ip_j$, where $i,j \in \{1,\ldots,s\}$ and $i \neq j$. Thus, $\omega(G) \geqslant 7$.
\end{proof}

\begin{lem} \label{simple-lem}
Let $G$ be a nonsolvable group and $N$ a characteristic subgroup of $G$. Assume that $|\pi(G)| = 4$ and $N$ is isomorphic to one of the groups $A_5, A_6, PSL(2,7)$ or $PSL(2,8)$. Then $\omega(G) \geqslant 8$. 
\end{lem}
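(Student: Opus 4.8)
The plan is to bound $\omega(G)$ below by $|spec(G)|$ and then to exhibit at least eight element orders in $G$. The first inequality is immediate: automorphisms preserve the order of an element, so each $Aut(G)$-orbit is contained in one set $\{\, g \in G : |g| = m \,\}$, whence $\omega(G) \geqslant |spec(G)|$.

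First I would pass to the characteristic subgroup $C = C_G(N)$ (it is characteristic because $N$ is). Then $N \trianglelefteq G$ and $N \cap C = Z(N) = 1$, so $NC = N \times C$ inside $G$, and the $N/C$-theorem gives an embedding $G/C \hookrightarrow Aut(N)$. Next I would record, case by case, that $Aut(N) \in \{S_5,\, P\Gamma L(2,9),\, PGL(2,7),\, P\Gamma L(2,8)\}$ (cf. \cite{Atlas}); since $Z(N)=1$ we have $|Aut(N)| = |N|\,|Out(N)|$ with $|Out(N)| \leqslant 4$ and $\pi(Out(N)) \subseteq \{2,3\} \subseteq \pi(N)$, so $\pi(Aut(N)) = \pi(N)$, a set of exactly three primes. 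Hence $\pi(G/C) \subseteq \pi(N)$.

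The crux is to locate the ``extra'' prime. Since $|\pi(G)| = 4$, fix $p \in \pi(G) \setminus \pi(N)$. From $\pi(G) \subseteq \pi(C) \cup \pi(G/C)$ and $p \notin \pi(G/C)$ I get $p \in \pi(C)$, so $C$ has an element $x$ of order $p$. Now for each $m \in spec(N)$ choose $n \in N$ with $|n| = m$; as $x$ centralizes $n$ and $\langle n\rangle \cap \langle x\rangle \leqslant N \cap C = 1$, the element $nx$ has order $\mathrm{lcm}(m,p) = mp$ (indeed $p \nmid m$, because $m \mid |N|$ and $p \notin \pi(N)$). Thus $spec(G) \supseteq spec(N) \cup \{\, mp : m \in spec(N)\,\}$.

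Finally I would check the last set has at least $8$ elements. Multiplication by $p$ is injective, so $|\{\, mp : m \in spec(N)\,\}| = |spec(N)| \geqslant |\pi(N)| + 1 = 4$ (the spectrum contains $1$ and each prime dividing $|N|$); and the two sets are disjoint, since $mp = m'$ with $m,m' \in spec(N)$ would force $p \mid |N|$, i.e.\ $p \in \pi(N)$. Therefore $|spec(G)| \geqslant 2\,|spec(N)| \geqslant 8$ and $\omega(G) \geqslant 8$. I expect the only real obstacle to be the localization step: one must rule out the possibility that the new prime $p$ is ``used up'' in the quotient $G/C_G(N) \hookrightarrow Aut(N)$, which is exactly why the (outer) automorphism groups of the four simple groups have to be inspected. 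Once $p$ is forced into $C_G(N)$, multiplying a $p$-element of $C_G(N)$ by $N$-elements of all orders in $spec(N)$ doubles the spectrum and the rest is bookkeeping; for $N = A_5$ the bound is tight, with $spec(G) \supseteq \{1,2,3,5,p,2p,3p,5p\}$ and $p \geqslant 7$.
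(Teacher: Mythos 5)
Your proof is correct and follows essentially the same route as the paper: the key point in both is that the fourth prime $p$ does not divide $|Aut(N)|$, forcing a $p$-element to centralize $N$, after which multiplying it against elements of each order in $spec(N)$ doubles the spectrum and gives $\omega(G)\geqslant |spec(G)|\geqslant 8$. The only (harmless) difference is that you extract the $p$-element from $C_G(N)$ via the $N/C$-theorem, while the paper takes a Sylow $p$-subgroup $P$ and notes $NP=N\times P$ by the same coprimality; your version also makes explicit the check $\pi(Aut(N))=\pi(N)$ that the paper leaves implicit.
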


\begin{proof}
Let $P$ be a Sylow $p$-subgroup of $G$, where $p \not\in \pi(N)$. Set $M = NP$. Since $p$ and $|Aut(N)|$ have coprime orders, we conclude that $M = N \times P$. Arguing as in the proof of Lemma \ref{ch.simple} we deduce that $\omega(G) \geqslant 8$. 
\end{proof}

\begin{rem} \label{prop.spec}
(Stroppel, \cite[Lemma 1.2]{S}) Let $G$ be a nontrivial group and $K$ a characteristic subgroup of $G$. Then
$$
\omega(G) \geqslant \omega(K)+ \omega(G/K) - 1.
$$
\end{rem}

\begin{lem} \label{lem.spec}
Let $G$ be a nonsolvable group in which  $|spec(G)|\geqslant 6$. Then either $G$ is simple or $\omega(G) \geqslant 7$.
\end{lem}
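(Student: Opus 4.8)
I would prove this by reducing to Lemma \ref{Z.th} via a simple counting argument. The key elementary observation is that every automorphism of $G$ preserves the order of each element, so elements of distinct orders always lie in distinct $Aut(G)$-orbits. Consequently the number of orbits is bounded below by the number of distinct element orders: $\omega(G) \geqslant |spec(G)|$. In particular, if $|spec(G)| \geqslant 7$ there is nothing to prove, so the whole content of the lemma is the case $|spec(G)| = 6$.

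So assume $|spec(G)| = 6$ and suppose, for contradiction, that $G$ is not simple and $\omega(G) \leqslant 6$. Combining with the inequality above gives $\omega(G) = 6 = |spec(G)|$. Since the $Aut(G)$-orbits partition $G$, and there are exactly $|spec(G)|$ of them, one per element order, each set $\{g \in G : |g| = d\}$ (for $d \in spec(G)$) must be a single orbit. In other words, any two elements of the same order are conjugate under $Aut(G)$; that is, $G$ is an AT-group.

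Now apply Lemma \ref{Z.th}: $G$ is a nonsolvable AT-group with $\omega(G) \leqslant 6$, hence $G$ is simple (indeed isomorphic to one of $A_5$, $A_6$, $PSL(2,7)$, $PSL(2,8)$, $PSL(3,4)$). This contradicts the assumption that $G$ is not simple, and therefore $\omega(G) \geqslant 7$, completing the proof.

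I do not expect any genuine obstacle here: the argument is a clean reduction, and the only point requiring a moment's care is the pigeonhole step that upgrades the equality $\omega(G) = |spec(G)|$ to the statement that $G$ is an AT-group, together with making sure Lemma \ref{Z.th} is applicable under the hypothesis $\omega(G) \leqslant 6$ rather than $\omega(G) = 6$.
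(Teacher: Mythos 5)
Your proof is correct and follows essentially the same route as the paper: the paper's (terser) argument also observes that $\omega(G)\geqslant|spec(G)|$ forces the case $\omega(G)=6$ to make $G$ an AT-group, and then applies Lemma \ref{Z.th} to conclude simplicity. Your version merely spells out the pigeonhole step and the case $|spec(G)|\geqslant 7$ explicitly, which the paper leaves implicit.
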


\begin{proof}
Assume that $\omega(G)= 6$. Then, $G$ is AT-group. By Lemma \ref{Z.th}, $G$ is simple. Moreover, $G \simeq PSL(3,4)$.       
\end{proof}

\begin{prop} \label{A5-prop}
Let $G$ be a group and $N$ a proper characteristic subgroup of $G$. If $N$ is isomorphic to one of the following groups $A_5,A_6,PSL(2,7)$ or $PSL(2,8)$, then $\omega(G) \geqslant 7$.
\end{prop}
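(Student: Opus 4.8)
The plan is to reduce the statement to Lemma~\ref{lem.spec}. Since $N$ is a proper nontrivial normal subgroup, $G$ is not simple; and since $G$ contains a copy of $N$ it is nonsolvable. Hence by Lemma~\ref{lem.spec} it suffices to prove that $|spec(G)| \geq 6$. Throughout I will use the elementary fact that automorphisms preserve orders, so elements of pairwise distinct orders lie in pairwise distinct $Aut(G)$-orbits; in particular $\omega(G) \geq |spec(G)|$, and $spec(N) \subseteq spec(G)$. By Remark~\ref{rem.spec} each of the four possible groups $N$ satisfies $|spec(N)| \geq 4$ and $|\pi(N)| = 3$.

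First I would handle the case $\pi(G) \neq \pi(N)$. As $|\pi(N)| = 3$, there is then a prime $p \in \pi(G) \setminus \pi(N)$; since any such $p$ is at least $5$ and $|Aut(N)| \in \{120, 1440, 336, 1512\}$, we have $p \nmid |Aut(N)|$, so a Sylow $p$-subgroup $P$ of $G$ acts trivially on the normal subgroup $N$ and $NP = N \times P$. Then $spec(NP)$ contains the two disjoint sets $spec(N)$ and $p\cdot spec(N)$, whence $|spec(G)| \geq 2|spec(N)| \geq 8$; this is essentially the argument of Lemma~\ref{simple-lem}. So from now on I may assume $\pi(G) = \pi(N)$, a set of three primes.

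Next I would split according to $C := C_G(N)$. Suppose $C \neq 1$. Pick a prime $p$ dividing $|C|$ — necessarily $p \in \pi(C) \subseteq \pi(G) = \pi(N)$ — and an element $z \in C$ of order $p$. For each of the two primes $r \in \pi(N) \setminus \{p\}$, Cauchy gives an element $x_r \in N$ of order $r$, and since $z$ centralizes $N$ the element $x_r z$ has order $rp$. Because $\{r,p\}$ consists of two of the three primes of $\pi(N)$, inspection of Remark~\ref{rem.spec} shows $rp \notin spec(N)$ in every case, and the two choices of $r$ give two distinct products. Thus $spec(G)$ contains $spec(N)$ together with two further orders, so $|spec(G)| \geq |spec(N)| + 2 \geq 6$ and Lemma~\ref{lem.spec} applies. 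Suppose instead $C = 1$. Then conjugation embeds $G$ into $Aut(N)$ with $G \supseteq Inn(N) \cong N$, and since $G \neq N$ and $|Out(N)| \leq 4$ the group $G$ is one of the finitely many groups strictly between $Inn(N)$ and $Aut(N)$: namely $S_5$; $PGL(2,7)$; $P\Gamma L(2,8)$; and $S_6$, $PGL(2,9)$, $M_{10}$ or $P\Gamma L(2,9)$ when $N = A_6$. For each of these I would exhibit element orders outside $spec(N)$: a permutation of cycle type $3\cdot 2$ has order $6 \notin spec(A_5)$ in $S_5$ and order $6 \notin spec(A_6)$ in $S_6$; the groups $PGL(2,7)$, $PGL(2,9)$, $M_{10}$ and $P\Gamma L(2,9)$ all contain an element of order $8$; and in $P\Gamma L(2,8)$ a field automorphism of order $3$ centralizes a subgroup $PSL(2,2) \cong S_3$, so its product with an involution of that subgroup has order $6 \notin spec(PSL(2,8))$. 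In each case $|spec(G)| \geq 6$, and Lemma~\ref{lem.spec} gives $\omega(G) \geq 7$.

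I expect the case $C_G(N) = 1$ to be the only real work, since it is not uniform and forces one to run through a short list of almost simple groups by hand, invoking the subgroup structure of $PGL(2,q)$, of $M_{10}$ and of $P\Gamma L(2,8)$. The case $C_G(N) \neq 1$, by contrast, is completely uniform: the key observation is simply that commuting elements of coprime prime orders — one drawn from $C_G(N)$, one from $N$ — produce an element whose order is a product of two primes of $\pi(N)$ and is therefore automatically absent from $spec(N)$. The only routine points to verify are the arithmetic facts $rp \notin spec(N)$ (immediate from Remark~\ref{rem.spec}) and $p \nmid |Aut(N)|$ for $p \geq 5$ in the first reduction.
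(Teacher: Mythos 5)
Your proposal is correct in substance but follows a genuinely different route from the paper. The paper also funnels everything through Lemma~\ref{lem.spec}, but after reducing to $\pi(G)=\pi(N)$ it takes a subgroup $M\leqslant G$ with $|M|=p|N|$ and invokes a GAP computation to verify $|spec(M)|\geqslant 6$ for all such extensions, handling the one exceptional configuration ($N\simeq PSL(2,8)$, $p=7$) by a separate centralizer argument. You instead split on $C=C_G(N)$: when $C\neq 1$ you get a uniform, computer-free argument (a commuting pair of elements of distinct prime orders, one from $C$ and one from $N$, yields two element orders $rp\notin spec(N)$, and $|spec(N)|+2\geqslant 6$), and when $C=1$ you identify $G$ with one of the finitely many groups strictly between $Inn(N)$ and $Aut(N)$ and check orders by hand; this trades the paper's GAP verification for standard Atlas-level facts about $S_5$, $S_6$, $PGL(2,7)$, $PGL(2,9)$, $M_{10}$, $P\Gamma L(2,9)$ and $P\Gamma L(2,8)$, and your first reduction also quietly generalizes Lemma~\ref{simple-lem} to any $\pi(G)\neq\pi(N)$ rather than $|\pi(G)|=4$. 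Essentially, where the paper's case analysis is indexed by the prime $p$ dividing $|G/N|$, yours is indexed by whether $N$ is centralized at all, which is arguably cleaner.

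One small repair is needed in the $C=1$ case for $N\simeq A_5$: there $G\simeq S_5$, and exhibiting only the order $6$ gives $spec(A_5)\cup\{6\}=\{1,2,3,5,6\}$, i.e.\ five orders, not six. You must also note that $S_5$ contains $4$-cycles, so $4\in spec(G)\setminus spec(A_5)$ and $|spec(S_5)|=|\{1,2,3,4,5,6\}|=6$; the analogous count is already fine for $S_6$ (where $|spec(A_6)|=5$) and for the other almost simple candidates via the order-$8$ and order-$6$ elements you exhibit. With that one-line fix the argument is complete.
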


\begin{proof}  
By Lemma \ref{simple-lem}, there is no loss of generality in assuming $\pi(G) = \pi(N)$. In particular, there is a subgroup $M$ in $G$ such that $|M| = p|N|$, for some prime $p \in \pi(G)$. Excluding the case $N \simeq PSL(2,8)$ and $p=7$, a GAP computation shows that $|spec(M)| \geqslant 6$. By Lemma \ref{lem.spec}, $\omega(G) \geqslant 7$. Finally, if $|M| = 7|N|$, where $N \simeq PSL(2,8)$, then $M/C_M(N) \lesssim Aut(N)$. Since $\pi(Aut(N)) = \pi(N)$, we have $C_M(N) \neq \{1\}$. Thus, $\omega(G) \geqslant 7$.
\end{proof}

The following result gives us a description of all nonabelian simple groups with at most $5$ automorphism orbits.

\begin{thm} (Stroppel, \cite[Theorem 4.5]{S}) \label{S.th}
Let $G$ be a non-abelian simple group in which $\omega(G) \leqslant 5$. Then $G$ is isomorphic to one of the groups $A_5$, $A_6$, $PSL(2,7)$ or $PSL(2,8)$.
\end{thm}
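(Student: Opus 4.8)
The plan is to reduce the assertion to a statement about the set of element orders and then appeal to the classification of the finite simple groups through the list of simple $K_3$-groups.

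Since every automorphism of $G$ preserves element orders, elements of distinct orders lie in distinct $\mathrm{Aut}(G)$-orbits; adding the orbit $\{1\}$ gives $\omega(G)\geqslant |spec(G)|$. Hence $\omega(G)\leqslant 5$ forces $|spec(G)|\leqslant 5$. As $G$ is nonabelian simple, Burnside's $p^{a}q^{b}$-theorem yields $|\pi(G)|\geqslant 3$, and Cauchy's theorem shows that each prime of $\pi(G)$ occurs as an element order, so $3\leqslant |\pi(G)|\leqslant |spec(G)|-1\leqslant 4$.

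Next I would rule out $|\pi(G)|=4$. In that case $spec(G)$ already contains $1$ together with the four primes dividing $|G|$, so $|spec(G)|\leqslant 5$ forces $spec(G)=\{1\}\cup\pi(G)$; that is, every nontrivial element of $G$ has prime order. By the classification of the nonsolvable groups in which every nontrivial element has prime order, the only such group is $A_{5}$, for which $|\pi(A_{5})|=3$ --- a contradiction. Therefore $|\pi(G)|=3$.

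Finally, for $|\pi(G)|=3$ I would run through Herzog's list of simple $K_{3}$-groups, namely $A_{5}$, $A_{6}$, $PSL(2,7)$, $PSL(2,8)$, $PSL(2,17)$, $PSL(3,3)$, $PSU(3,3)$ and $PSU(4,2)$, and compute the spectra. The last four are excluded since they have more than five element orders (for instance $PSL(2,17)$ has elements of orders $1,2,3,4,8,9,17$), while $A_{5}$, $A_{6}$, $PSL(2,7)$, $PSL(2,8)$ all satisfy $|spec|\leqslant 5$ and survive; these four are exactly the groups in the statement. The main obstacle is that both the $|\pi(G)|=4$ step and the $|\pi(G)|=3$ step rest on deep classification theorems (the structure of groups all of whose elements have prime order, and the list of simple $K_{3}$-groups, both ultimately depending on CFSG); the genuinely elementary content is only the chain $\omega(G)\leqslant 5\Rightarrow |spec(G)|\leqslant 5\Rightarrow |\pi(G)|\leqslant 4$ together with the final spectrum bookkeeping.
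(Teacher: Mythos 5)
Your argument is essentially sound, but note that the paper does not prove this statement at all: it is quoted verbatim as Stroppel's result \cite[Theorem 4.5]{S}, so any complete argument you give is automatically a different route from the paper's, and the fair comparison is with what such a proof must invoke. Your reduction is the natural one: order is an automorphism invariant, so $\omega(G)\geqslant|spec(G)|$, Burnside's $p^aq^b$-theorem and Cauchy give $3\leqslant|\pi(G)|\leqslant|spec(G)|-1\leqslant 4$; in the four-prime case $spec(G)=\{1\}\cup\pi(G)$ forces every nontrivial element to have prime order, and the classification of such groups (Deaconescu; Cheng--Deaconescu--Lang--Shi) leaves only $A_5$, a contradiction; in the three-prime case Herzog's list of simple $K_3$-groups plus a spectrum check eliminates $PSL(2,17)$, $PSL(3,3)$, $PSU(3,3)$ and $PSU(4,2)$, leaving exactly the four groups claimed. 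All of these steps check out (the spectra you quote are correct, and only the stated implication is needed, so you need not verify that the surviving groups actually satisfy $\omega\leqslant 5$). What your route buys is a transparent, self-contained derivation whose only elementary content is the chain $\omega\Rightarrow spec\Rightarrow\pi$, at the cost of importing two CFSG-dependent classifications (EPO groups and $K_3$-groups); what the paper buys by citing Stroppel is brevity and a single reference, which is consistent with its reliance elsewhere on Kohl's and Zhang's classification-based results. If you wanted to write your version up formally, the only things to make precise are the exact references for the two classification inputs.
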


\section{Proofs of the main results}

\begin{thmA}
Let $G$ be a nonsolvable group in which $\omega(G) \leqslant 5$. Then $G$ is isomorphic to one of the following groups $A_5$, $A_6$, $PSL(2,7)$ or $PSL(2,8)$. 
\end{thmA}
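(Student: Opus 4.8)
The plan is to prove that $G$ is simple and then quote Theorem \ref{S.th}. Let $N$ be a minimal characteristic subgroup of $G$; since a characteristic subgroup of $N$ is characteristic in $G$, the group $N$ is characteristically simple. If $N$ is nonabelian, then $N$ is simple by Lemma \ref{ch.simple}, $\omega(N)\le\omega(G)\le 5$ by Remark \ref{prop.spec}, and $N\in\{A_5,A_6,PSL(2,7),PSL(2,8)\}$ by Theorem \ref{S.th}; were $N$ proper, Proposition \ref{A5-prop} would give $\omega(G)\ge 7$, so $G=N$ and we are done. Hence we may assume $N$ is an elementary abelian $p$-group (so $\omega(N)=2$), and it remains to derive a contradiction.

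As $N$ is solvable and $G$ is not, $G/N$ is nonsolvable, and $\omega(G/N)\le\omega(G)-\omega(N)+1\le 4$ by Remark \ref{prop.spec}; by item (ii) of the Laffey--MacHale results recalled in the Introduction, $G/N\simeq A_5$. Since $\omega(A_5)\ge|spec(A_5)|=4$ (Remark \ref{rem.spec}) while $\omega(G/N)\le 4$, we get $\omega(G/N)=4$, hence $\omega(G)=5$ with equality in Remark \ref{prop.spec}. Write $\pi\colon G\to G/N\simeq A_5$. The image of $Aut(G)$ in $Aut(G/N)\simeq S_5$ contains $Inn(A_5)\simeq A_5$, hence equals $A_5$ or $S_5$, with respectively $4$ or $3$ orbits on $A_5\setminus\{1\}$. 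Counting $Aut(G)$-orbits on $G$ — the orbit $\{1\}$, at least one inside $N\setminus\{1\}$, and at least one above each orbit of that image on $A_5\setminus\{1\}$ — the equality $\omega(G)=5$ forces: the image is $S_5$; $N\setminus\{1\}$ is a single $Aut(G)$-orbit; and $O_2:=\pi^{-1}(T_2)$, $O_3:=\pi^{-1}(T_3)$, $O_5:=\pi^{-1}(T_5)$ are single $Aut(G)$-orbits, where $T_2,T_3,T_5$ are the sets of involutions, of $3$-cycles and of $5$-cycles of $A_5$. In particular, all elements of a given $O_i$ have the same order.

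Now $C:=C_G(N)$ is characteristic with $N\le C$ and $C/N\trianglelefteq A_5$, so $C=N$ or $C=G$. If $N\le Z(G)$, then $G$ is a central extension of $A_5$ by $N$; since the Schur multiplier of $A_5$ is $\mathbb{Z}/2$, the derived subgroup $G'$ (a characteristic subgroup) is isomorphic to $A_5$ or to $SL(2,5)$, and minimality of $N$ — in the latter case the characteristic subgroup $Z(G')$ would be contained in $N$ — forces $G\simeq A_5\times N$ or $G\simeq SL(2,5)$. In both cases $|spec(G)|\ge 6$, so Lemma \ref{lem.spec} yields $\omega(G)\ge 7$, a contradiction. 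So assume $C_G(N)=N$, i.e.\ $A_5$ acts faithfully on $N$, and first let $p$ be odd. If some element of $O_2$ had order $2p$, then (as $O_2$ is a single orbit) every element would, and in particular a preimage $\tilde t$ of an involution $t$ would have $\tilde t^{\,p}$ of order $2$ again in $O_2$ — impossible; so every element of $O_2$ is an involution. Then for any preimage $s$ of an involution $t$ and any $n\in N$ we have $(sn)^2=({}^{s}n)\,n=1$, hence ${}^{s}n=n^{-1}$; thus every such $s$ inverts $N$. Choosing involutions $t_1,t_2\in A_5$ with $t_1t_2$ of order $3$ and preimages $s_1,s_2$, both inverting $N$, the product $s_1s_2$ centralizes $N$, so $s_1s_2\in C_G(N)=N$, contradicting $\pi(s_1s_2)=t_1t_2\ne 1$.

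The remaining case $C_G(N)=N$ with $p=2$ is the main obstacle. Here $N$ is a faithful $\mathbb{F}_2A_5$-module. Characteristic subgroups of $G$ contained in $N$ coincide with the submodules of $N$ for the image $\widetilde A\le GL(N)$ of $Aut(G)$ (which normalizes the $A_5$-action), so $N$ is $\widetilde A$-irreducible, hence $A_5$-isotypic and built from one of the two $4$-dimensional faithful irreducible $\mathbb{F}_2A_5$-modules. Since $\widetilde A$ is transitive on $N\setminus\{1\}$, its order is divisible by $2^{\dim N}-1$; comparing this with $|N_{GL(N)}(A_5)|$ (e.g.\ using a primitive prime divisor of $2^{\dim N}-1$) forces $\dim N=4$, so $|G|=960$. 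Finally, for each of the two modules one checks that an involution $t\in A_5$ satisfies $C_N(t)=[N,t]$, both of dimension $2$; as $\tilde t^{\,2}\in C_N(t)$ for every preimage $\tilde t$ of $t$, the element $t$ lifts to an involution $s\in G$, and then the coset $sN\subseteq O_2$ contains involutions (the $sn$ with $n\in C_N(t)$) as well as elements of order $4$ (the $sn$ with $n\notin C_N(t)$, which exist because $[N,t]\ne 0$). Hence $O_2$ is not a single $Aut(G)$-orbit — the final contradiction. (Alternatively, the two resulting groups of order $960$ may be checked directly, e.g.\ with GAP.)
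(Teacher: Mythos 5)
Your argument is correct, and its outer skeleton is the paper's: pass to a characteristic subgroup, use Remark \ref{prop.spec} together with the Laffey--MacHale result to reduce to $G/N\simeq A_5$ with $N$ elementary abelian, dispose of a simple proper characteristic subgroup via Proposition \ref{A5-prop}, and quote Theorem \ref{S.th} for the simple case. The genuine difference is how the residual case is eliminated. The paper is short there: for odd $p$ it quotes Gorenstein for an element of order $2p$ (the Sylow $2$-subgroup is noncyclic), and for $p=2$ it notes that an element of order $4$ forces $spec(G)=\{1,2,3,4,5\}$, so $G$ is an AT-group and Lemma \ref{Z.th} makes it simple, while if every $2$-element outside $N$ is an involution then these elements centralize $N$ (inversion is the identity when $p=2$), giving $N\le Z(G)$ and $\omega(G)\ge 6$. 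You instead extract the exact orbit picture ($\omega(G)=5$, the induced group on $G/N$ is $S_5$, and $N\setminus\{1\}$, $O_2$, $O_3$, $O_5$ are single orbits) and split on $C_G(N)$: the central case via the Schur multiplier of $A_5$, odd $p$ via the observation that all preimages of involutions invert $N$ (a neat replacement for the Gorenstein citation), and $p=2$ with $C_G(N)=N$ via Clifford theory, a Zsigmondy-type transitivity bound, and the identity $C_N(t)=[N,t]$ in the two $4$-dimensional faithful $\mathbb{F}_2A_5$-modules; I checked these module facts and the lifting of $t$ to an involution, and they hold. What each route buys: yours is more self-contained and in effect isolates the order-$960$ configuration of Example \ref{ex.N} before excluding it; the paper's AT-group trick is far quicker, and you could have used it to bypass your entire $p=2$, $C_G(N)=N$ analysis --- once some preimage of an involution has order $4$, $spec(G)\supseteq\{1,2,3,4,5\}$ and $\omega(G)=5$ make $G$ an AT-group, contradicting Lemma \ref{Z.th}; if all such preimages are involutions, your own computation $(sn)^2={}^{s}n\,n$ shows they centralize $N$, landing in the central case you had already settled.
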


\begin{proof}
According to Theorem \ref{S.th}, all simple groups with at most $5$ automorphism orbits are $A_5, A_6, PSL(2,7)$ and $PSL(2,8)$. We need to show that every non simple group $G$ with $\omega(G) \leqslant 5$ is solvable. 

Suppose that $G$ is not simple. Note that, if $G$ is caracteristically simple and $\omega(G) \leqslant 5$, then $G$ is simple (Lemma \ref{ch.simple}). Thus, we may assume that $G$ contains a proper nontrivial characteristic subgroup, say $N$. By Remark \ref{prop.spec}, $\omega(N)$ and $\omega(G/N) \leqslant 4$. By \cite[Theorem 3]{LD}, it suffices to prove that $N$ and $G/N$ cannot be isomorphic to $A_5$. If $N \simeq A_5$, then $\omega(G) \geqslant 7$ by Proposition \ref{A5-prop}. Suppose that $G/N \simeq A_5$. Then $N$ is elementary abelian $p$-group, for some prime $p$. For convenience, the next steps of the proof are numbered.

\begin{itemize}
 \item[(1)] Assume $p\neq 2$. 
\end{itemize}
Since a Sylow 2-subgroup of $G$ is not cyclic, we have an element in $G$ of order $2p$ \cite[p. 225]{G}. Therefore $\omega(G) \geqslant 6$. 

\begin{itemize}
 \item[(2)] Assume $p=2$. 
\end{itemize}
In particular, $|g| \in \{2,4\}$ for any $2$-power element outside of $N$. Note that, if $|g|=4$, then $G$ is AT-group. By Lemma \ref{Z.th}, $G$ is simple, a contradiction. So, we may assume that there exists an involution $g$ outside of $N$. We have, $(gh)^2 \in N$, for any $h \in N$. In particular, $gh = hg$, for any $h \in N$. Therefore $N < C_G(N)$. Since $G/N \simeq A_5$, it follows that $N \subseteq Z(G)$. So $\omega(G) \geqslant 6$. Thus $G$ is solvable, which completes the proof. 
\end{proof}

It is convenient to prove first Theorem B under the hypothesis that $|\pi(G)| > 3$ and then extend the result to the general case. 

\begin{prop} \label{aux-prop}
Let $G$ be a nonsolvable group in which $\omega(G) = 6$. If $|\pi(G)| > 3$, then $G \simeq PSL(3,4)$.
\end{prop}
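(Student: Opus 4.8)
The goal is to show that a nonsolvable group $G$ with $\omega(G)=6$ and $|\pi(G)|>3$ must be $PSL(3,4)$. The natural strategy is to reduce to the case where $G$ is characteristically simple, apply Lemma \ref{ch.simple} to conclude $G$ is simple, and then invoke Lemma \ref{Z.th} (or directly Kohl/Stroppel-type results) together with the spectrum data in Remark \ref{rem.spec} to pin down which simple group it is. So the first step is to rule out the existence of a proper nontrivial characteristic subgroup $N$ of $G$.

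**Ruling out proper characteristic subgroups.**

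Suppose $N$ is a proper nontrivial characteristic subgroup of $G$. By Remark \ref{prop.spec} we get $\omega(N)+\omega(G/N)\leqslant 7$, so both $\omega(N)$ and $\omega(G/N)$ are at most $6$, and at least one of them is at most $4$ (since if both were $\geqslant 4$ their sum would be $\geqslant 8$ unless one equals $3$; more carefully, $\omega(N)+\omega(G/N)\le 7$ forces $\min\{\omega(N),\omega(G/N)\}\le 3$). Since $G$ is nonsolvable, at least one of $N$, $G/N$ is nonsolvable. If the nonsolvable factor has $\omega\le 4$, then by Laffey--MacHale (ii) it is $A_5$; if it has $\omega=5$, then by Theorem A it is one of $A_5,A_6,PSL(2,7),PSL(2,8)$; if it has $\omega=6$, I would like to induct, but that factor could in principle again be $PSL(3,4)$ or an extension of $A_5$. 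The cleanest route: first handle the case where $N$ (or $G/N$) is nonsolvable with $\omega\le 5$ — then $N$ or $G/N$ is one of the four groups in Theorem A, and Proposition \ref{A5-prop} (applied to $N$, if $N$ is the characteristic subgroup isomorphic to one of those four groups) gives $\omega(G)\geqslant 7$, a contradiction. The symmetric case where $G/N$ is one of those four groups needs separate attention: here $N$ is solvable and nontrivial, and one would argue using $|\pi(G)|>3$ together with Remark \ref{rem.spec} that extra element orders appear — e.g. if $q\in\pi(N)$ with $q\notin\pi(G/N)$, there are elements of order divisible by $q$ beyond what the quotient sees, pushing $\omega$ up past $6$. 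The remaining case is $\omega(N)=\omega(G/N)$ both nonsolvable — impossible since their sum would be at least $10>7$ — or the chain reaches a nonsolvable characteristic section with $\omega=6$, which by the AT-group argument (Lemma \ref{Z.th}, Lemma \ref{lem.spec}) and $|\pi|>3$ forces it to be $PSL(3,4)$ already and then one shows $G$ has no room to properly contain it (Proposition \ref{A5-prop}-style argument extended to $PSL(3,4)$, using that $|\pi(PSL(3,4))|=4=|\pi(G)|$ and any proper overgroup acquires a new element order).

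**Concluding via the characteristically simple case.**

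Once no proper nontrivial characteristic subgroup survives, $G$ is characteristically simple and nonabelian, so Lemma \ref{ch.simple} gives that $G$ is simple. Then $G$ is a nonabelian simple group with $\omega(G)=6$; by Theorem \ref{S.th} it is not one of the groups with $\omega\le 5$, and since $|\pi(G)|>3$, inspecting Remark \ref{rem.spec} (the only listed simple group with four prime divisors of the order is $PSL(3,4)$) together with the fact that among nonabelian simple groups with few automorphism orbits the candidate list is exactly $A_5,A_6,PSL(2,7),PSL(2,8),PSL(3,4)$ (this is where one uses the classification-of-finite-simple-groups input underlying Lemma \ref{Z.th} / Kohl's theorem), we conclude $G\simeq PSL(3,4)$.

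**Main obstacle.**

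The main obstacle is the case analysis when $G$ has a proper nontrivial characteristic subgroup $N$: specifically handling the subcase where $G/N$ is one of $A_5,A_6,PSL(2,7),PSL(2,8)$ with $N$ a nontrivial solvable normal subgroup — one must extract enough distinct automorphism orbits from the prime in $\pi(N)$ (guaranteed to exist once $|\pi(G)|>3$ if $\pi(G/N)$ has only three primes, which it does for all four groups except that $PSL(2,8)$ and the others have exactly $|\pi|=3$) to force $\omega(G)\geqslant 7$; the coprimality/action argument in Lemma \ref{simple-lem} and Proposition \ref{A5-prop} is the template, but one has to be careful that $N$ need not be elementary abelian or even a $p$-group here, only solvable, so one may need to pass to a characteristic section of $N$ first. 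The other delicate point is making the induction on $\omega$ rigorous when a nonsolvable characteristic section again has $\omega=6$; bounding $|\pi|$ and using that $PSL(3,4)$ cannot be a proper characteristic subgroup (extending Proposition \ref{A5-prop}) closes that gap.
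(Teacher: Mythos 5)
Your overall skeleton (reduce to the characteristically simple case, then Lemma \ref{ch.simple} plus Kohl's classification) matches the paper, and the subcase where the characteristic subgroup $N$ is itself nonsolvable is handled adequately (Proposition \ref{A5-prop}, or Lemma \ref{simple-lem} as in the paper, gives $\omega(G)\geqslant 7$). But there is a genuine gap in the case you yourself flag as the main obstacle: $G/N$ nonsolvable, hence isomorphic to $A_5$, $A_6$, $PSL(2,7)$ or $PSL(2,8)$, with $N$ solvable and nontrivial. Your argument there is only that "elements of order divisible by $q$ appear, pushing $\omega$ up past $6$", and this does not follow: the element orders you are guaranteed at this point are just $1,2,3,p,q$, which only forces $\omega(G)\geqslant 5$, not $\geqslant 7$. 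To get a contradiction you must produce elements of \emph{mixed} order ($2q$, $3q$, $pq$) or otherwise exhibit at least seven orbits, and nothing in your sketch does that. The paper's actual route is: since $\omega(G/N)\geqslant 4$, Remark \ref{prop.spec} forces $\omega(N)\leqslant 3$, so Laffey--MacHale's structure theorem \cite[Theorem 2]{LD} yields a characteristic elementary abelian $q$-subgroup of $N$ (hence of $G$), one reduces to $N$ elementary abelian, applies Schur--Zassenhaus to obtain a complement $K\simeq G/N$, uses a coprimality argument to conclude $G=N\times K$, and only then reads off mixed-order elements as in Lemma \ref{ch.simple} to get $\omega(G)\geqslant 8$. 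None of these steps (the bound $\omega(N)\leqslant 3$, the passage to a characteristic elementary abelian section, Schur--Zassenhaus, the direct-product conclusion) appear in your proposal; saying that "one may need to pass to a characteristic section of $N$ first" names the difficulty without resolving it.

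A secondary point: your worry about a factor with $\omega=6$, and the proposed induction and extension of Proposition \ref{A5-prop} to $PSL(3,4)$, are unnecessary. Since $N$ and $G/N$ are both nontrivial, each has at least two automorphism orbits, so Remark \ref{prop.spec} ($\omega(N)+\omega(G/N)\leqslant 7$) already gives $\omega(N),\omega(G/N)\leqslant 5$; Theorem A then covers every nonsolvable factor, and no inductive step is needed. This part of your write-up is a detour rather than an error, but it signals that the case analysis was not pinned down; the real missing content is the solvable-kernel case described above.
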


\begin{proof} 
Assume that $G$ is not characteristically simple. Let $N$ be a proper nontrivial characteristic subgroup of $G$. By Remark \ref{prop.spec}, $N$ and $G/N$ have at most $5$ automorphism orbits. Since $G$ is nonsolvable, we have $N$ or $G/N$  nonsolvable.  

Suppose that $N$ is nonsolvable. By Theorem A, $N$ is isomorphic to one of the groups $A_5,A_6,PSL(2,7)$ or $PSL(2,8)$. By Lemma \ref{simple-lem}, $\omega(G) \geqslant 8$. Thus, we may assume that $G/N$ is nonsolvable. By Theorem A, $G/N$ is isomorphic to one of the groups $A_5,A_6,PSL(2,7)$ or $PSL(2,8).$ Let $\pi(G) = \{2,3,p,q\}$ and $\pi(G/N) = \{2,3,p\}$. Since $\omega(N) \leqslant 3$, it follows that there exists a characteristic elementary abelian  $q$-subgroup $Q$ in $N$ (\cite[Theorem 2]{LD}). Without loss of generality we can assume that $Q=N$. By Schur-Zassenhaus Theorem \cite[p. 221]{G}, there exists a complement for $N$ in $G$ (that is, there exists a subgroup $K$ such that $G = KN$ and $K \cap N = 1$). In particular, $K \simeq G/N$. Since $|Aut(K)|$ and $|N|$ are coprime numbers, it follows that $G$ is the direct product of $N$ and $K$. Arguing as in the proof of Lemma \ref{ch.simple} we deduce that $\omega(G) \geqslant 8$. 

We may assume that $G$ is characteristically simple. By Lemma \ref{ch.simple}, $G$ is simple. Using Kohl's classification \cite{Kohl}, $G \simeq PSL(3,4)$. The result follows.
\end{proof}

\begin{ex} \label{ex.N}
Using GAP we obtained one example of nonsolvable and non simple group $G$ such that $|G| = 960$ and $\omega(G)=6$. Moreover, there exists a normal subgroup $N$ of $G$ such that
$$
G/N \simeq A_5 \ \mbox{e} \ N \simeq C_2 \times C_2 \times C_2 \times C_2.
$$
\end{ex}

\begin{thmB} \label{th.B}
Let $G$ be a nonsolvable group in which $\omega(G) = 6$. Then one of the following holds:
\end{thmB}

\begin{itemize}
 \item[(i)] $G \simeq PSL(3,4)$; 
 \item[(ii)] There exists a characteristic elementary abelian $2$-subgroup $N$ of $G$ such that $G/N \simeq A_5$.
\end{itemize}

\begin{proof} 
By Lemma \ref{ch.simple}, if $G$ is characteristically simple, then $G$ is simple. According to Kohl's classification \cite{Kohl}, $G \simeq PSL(3,4)$. In particular, by Proposition \ref{aux-prop}, if $|\pi(G)| \geqslant 4$, then $G \simeq PSL(3,4)$. So, we may assume that $|\pi(G)| = 3$ and $G$ is not characteristically simple. We need to show that for every non simple  and nonsolvable group $G$ with $\omega(G) = 6$, there exists a proper characteristic subgroup $N$ such that $G/N \simeq A_5$, where $N$ is an elementary abelian $2$-subgroup. 

Let $N$ be a proper characteristic subgroup of $G$. For convenience, the next steps of the proof are numbered.

\begin{itemize}
\item[(1)] Assume that $\omega(N) = 2$.
\end{itemize}

So, $\omega(G/N) = 4$ or $5$ and $N$ is elementary abelian $p$-group, for some prime $p$. According to Theorem A and Example \ref{ex.N}, it is sufficient to consider $G/N$ isomorphic to one of the groups $A_6, PSL(2,7)$ or $PSL(2,8)$. Since the Sylow 2-subgroup of $G/N$ is not cyclic, it follows that the subgroup $N$ is elementary abelian $2$-subgroup \cite[p.\ 225]{G}. Suppose that $G/N \simeq PSL(2,8)$. Arguing as in the proof of Theorem A we deduce that $G$ is $AT$-group, a contradiction. Now, we may assume that $G/N \in \{ A_6, PSL(2,7)\}$. Without loss of generality we can assume that there are elements $a \in G \setminus N$ and $h \in N$ such that $|a| = 2$ and $|ah| = 4$. Then there exist the only one automorphism orbit in which it elements has order $4$, $\{(ah)^{\varphi}  \mid \ \varphi \in Aut(G) \}$. On the other hand, $aN$ has order $2$ and $\omega(G) = 6$. Therefore $G/N$ cannot contains elements of order $4$, a contradiction.    

\begin{itemize}
\item[(2)] Assume that $\omega(N) = 3$.
\end{itemize}

There exists a characteristic subgroup $Q$ of $N$ and of $G$ (\cite[Theorem 2]{LD}). As $G/N$ and $G/Q$ are simple, we have a contradiction.

\begin{itemize}
\item[(3)] Assume that $\omega(N) = 4$ or $5$.
\end{itemize}

In particular, $\omega(G/N) \leqslant 3$. Arguing as in $(2)$ we deduce that $\omega(G/N) = 2$. By Theorem A, $N$ is
simple. Hence $$N \in \{A_5, A_6, PSL(2,7), PSL(2,8)\}.$$ By Proposition \ref{A5-prop}, $\omega(G) \geqslant 7$.    
\end{proof}

\subsection*{Acknowledgment}

The authors wishes to express their thanks to S\'ilvio Sandro for several helpful comments concerning ``GAP''.

\end{document}